\def\textmatrix#1&#2\\#3&#4\\{\bigl({#1 \atop #3}\ {#2 \atop #4}\bigr)}
\def\dispmatrix#1&#2\\#3&#4\\{\left({#1 \atop #3}\ {#2 \atop #4}\right)}
\newcommand{\beg}{\begin{equation}}
	\newcommand{\eeg}{\end{equation}}
\newcommand{\ben}{\begin{eqnarray*}}
	\newcommand{\een}{\end{eqnarray*}}
\newtheorem{thm}{Theorem}[section]
\newtheorem{lem}[thm]{Lemma}
\numberwithin{equation}{section} \theoremstyle{definition}
\newtheorem{defn}[thm]{Definition}
\newcommand{\C}{\mathbb{C}}
\newcommand{\T}{\mathbb{T}}
\def\textmatrix#1&#2\\#3&#4\\{\bigl({#1 \atop #3}\ {#2 \atop #4}\bigr)}
\def\dispmatrix#1&#2\\#3&#4\\{\left({#1 \atop #3}\ {#2 \atop #4}\right)}
\begin{document}
	\title[The $2 \times 2$ block $\mathbb A_r$-contractions]{The $2 \times 2$ block matrices associated with an annulus}
	\author[Pal and Tomar]{SOURAV PAL AND NITIN TOMAR}
	
	\address[Sourav Pal]{Mathematics Department, Indian Institute of Technology Bombay,
		Powai, Mumbai - 400076, India.} \email{sourav@math.iitb.ac.in}
	
	\address[Nitin Tomar]{Mathematics Department, Indian Institute of Technology Bombay, Powai, Mumbai-400076, India.} \email{tnitin@math.iitb.ac.in}		
	
	\keywords{Annulus, $\mathbb{A}_r$-contraction}	
	
	\subjclass[2010]{47A08, 47A25}	
	
	\thanks{The first named author is supported by the `Early Achiever Research Award' of IIT Bombay, India with Grant No. RI/0220-10001427-001. The second named author is supported by the Prime Minister's Research Fellowship (PMRF) with PMRF Id No. 1300140 of Govt. of India.}	
	
	%------------

	\begin{abstract}
		A bounded Hilbert space operator $T$ for which the closure of the annulus 
		\begin{center}
			$\mathbb{A}_r=\{z \ : \ r<|z|<1\} \subseteq \mathbb{C}, \qquad (0<r<1)$
		\end{center}
		is a spectral set is called an $\mathbb A_r$-contraction. A celebrated theorem due to Douglas, Muhly and Pearcy gives a necessary and sufficient condition such that a $2 \times 2$ block matrix of operators
		$
		\begin{bmatrix}
		T_1 & X \\
		0 & T_2
		\end{bmatrix}
		$
		is a contraction. We seek an answer to the same question in the setting of annulus, i.e., under what conditions $\widetilde{T}_Y=\begin{bmatrix}
			T_1 & Y\\
			0 & T_2\\
		\end{bmatrix}
		$ becomes an $\mathbb A_r$-contraction. For a pair of $\mathbb A_r$-contractions $T_1,T_2$ and an operator $X$ that commutes with $T_1,T_2$, here we find a necessary and sufficient condition such that each of the block matrices
		\[
		T_X= \begin{bmatrix}
			T & X\\
			0 & T\\
		\end{bmatrix} \,, \quad \widehat{T}_X=\begin{bmatrix}
			T_1 & X(T_1-T_2)\\
			0 & T_2\\
		\end{bmatrix}
		\]
		becomes an $\mathbb A_r$-contraction. Thus, the general block matrix $\widetilde{T}_Y$ is an $\mathbb A_r$-contraction if $T_1-T_2$ (as in $\widehat{T}_X$) is invertible.
		
	\end{abstract}

	\maketitle	

	\section{Introduction}

	\noindent Throughout this paper, all operators are bounded linear operators on complex Hilbert spaces. For a complex Hilbert space $\mathcal{H}$, we denote by $\mathcal{B}(\mathcal{H})$ the space of all operators acting on $\mathcal{H}$. A contraction is an operator with norm not greater than $1$. For a contraction $T$, $D_T$ denotes the unique positive square root of $I-T^*T$. Also, $\mathbb D$ denotes the unit disk in the complex plane $\mathbb C$ with center at the origin.
	
	\smallskip
	
	 A compact set $K \subseteq \mathbb{C}$ is said to be a \textit{spectral set} for $T \in \mathcal{B}(\mathcal{H})$ if the spectrum $\sigma(T)$ of $T$ is contained in $K$ and von Neumann's inequality holds, i.e.  
	\begin{equation}\label{eqn1}
	\|f(T)\| \leq \underset{x \in K}{\sup}|f(x)|
	\end{equation}   
for all rational functions $f$ with poles off $K$. In \cite{vN}, von Neumann introduced the notion of spectral set and proved that an operator $T$ is a contraction if and only if the closed unit disk $\overline{\mathbb D}$ is a spectral set for $T$. This path breaking discovery motivates mathematicians to study operators having other domains as spectral sets and numerous interesting results are obtained, see \cite{paulsen} for the historical development. In this article, we consider an operator having a closed annulus as a spectral set.

\begin{defn}
An operator having the closure of the annulus
\[
\mathbb{A}_r=\{z \ : \ r<|z|<1\} \subseteq \mathbb{C}, \qquad (0<r<1)
\]
as a spectral set is called an $\mathbb A_r$-\textit{contraction}.
\end{defn}

Operator theory on an annulus has a rich literature, e.g. see \cite{Dmitry, DritschelI, Pas-McCull, comm lifting, Tsikalas}. Agler \cite{Agler} reduces $(\ref{eqn1})$ for the class of $\mathbb{A}_r$-contractions to some positivity condition of a certain family of operators. For $\epsilon$ in $(0,1)$, let us consider the function 
\[
\Gamma_\epsilon(z)=2\overset{\infty}{\underset{k=-\infty}{\sum}}\frac{(1-\epsilon)^k}{1+(1-\epsilon)^{2k}r^k} z^k \quad (r\leq |z| \leq 1). 
\]
It is not difficult to see that for small $\epsilon>0$, the series defining $\Gamma_\epsilon$ converges uniformly on compact subsets of $\{z \in \mathbb{C} \ : \ (1-\epsilon)r \leq |z| \leq 1\slash (1-\epsilon) \}.$ Therefore, if $T$ is an operator with $\sigma(T) \subseteq \overline{\mathbb{A}}_r,$ then $\Gamma_\epsilon(T)$ is well-defined. Agler proved the following result.
%--------------------------------------------------------------------
\begin{thm}[\cite{Agler}, Theorem 1.23]\label{Agler_pencil} A Hilbert space operator $T$ is an $\mathbb{A}_r$-contraction if and only if 
	\[
	\sigma(T) \subseteq \overline{\mathbb{A}}_r \quad \text{and} \quad \ Re \Gamma_\epsilon(\alpha T) \geq0
	\]
	for all $\epsilon \in (0,1)$ and $\alpha \in \mathbb{T}$, where $Re \Gamma_\epsilon(\alpha T)$ denotes the real part of the operator $ \Gamma_\epsilon(\alpha T)$.
\end{thm}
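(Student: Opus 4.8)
\emph{Proof proposal.} This characterization is due to Agler; here is the route I would follow. Write $R(\overline{\mathbb{A}}_r)$ for the uniform closure on $\overline{\mathbb{A}}_r$ of the rational functions with poles off $\overline{\mathbb{A}}_r$. By scaling, von Neumann's inequality for $T$ is equivalent to $\|h(T)\|\le 1$ for every $h$ with $\|h\|_{\overline{\mathbb{A}}_r}<1$; for such $h$ the function $g=(1+h)(1-h)^{-1}$ is holomorphic on a neighborhood of $\overline{\mathbb{A}}_r$ with $\mathrm{Re}\,g>0$ there, and every such $g$ arises this way. Since $\sigma(T)\subseteq\overline{\mathbb{A}}_r$ gives $\sigma(g(T))\subseteq g(\overline{\mathbb{A}}_r)\subseteq\overline{\mathbb{C}_+}$, the operator $g(T)+I$ is invertible and the Cayley identity $h(T)=(g(T)-I)(g(T)+I)^{-1}$ yields $\|h(T)\|\le 1\iff\mathrm{Re}\,g(T)\ge 0$. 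Hence ``$T$ is an $\mathbb{A}_r$-contraction'' means exactly that $\sigma(T)\subseteq\overline{\mathbb{A}}_r$ and $\mathrm{Re}\,g(T)\ge 0$ for every $g$ holomorphic near $\overline{\mathbb{A}}_r$ with $\mathrm{Re}\,g\ge 0$; so, granting the (trivial) spectral inclusion throughout, the theorem reduces to showing that this whole family of inequalities is equivalent to the single subfamily $\{\mathrm{Re}\,\Gamma_\epsilon(\alpha T)\ge 0: \epsilon\in(0,1),\ \alpha\in\mathbb{T}\}$. This is the normal form I would work in.

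One inclusion is essentially free: each $\Gamma_\epsilon(\alpha\,\cdot)$ is an admissible $g$. Expanding $\bigl(1+(1-\epsilon)^{2k}r^k\bigr)^{-1}$ as a geometric series exhibits $\Gamma_\epsilon$, up to an additive positive constant, as an absolutely convergent combination of the elementary kernels $(1+aw)(1-aw)^{-1}$ and $(1+bw^{-1})(1-bw^{-1})^{-1}$ with $a,b\in(0,1)$; more structurally, $\Gamma_\epsilon$ is a positive multiple of the Herglotz kernel of the strictly larger annulus $\{(1-\epsilon)r<|z|<1/(1-\epsilon)\}$ singular at its two boundary points $(1-\epsilon)r$ and $1/(1-\epsilon)$, so $\mathrm{Re}\,\Gamma_\epsilon>0$ on $\overline{\mathbb{A}}_r$, and likewise $\mathrm{Re}\,\Gamma_\epsilon(\alpha\,\cdot)>0$ there. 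Approximating $\Gamma_\epsilon(\alpha\,\cdot)$ uniformly on $\overline{\mathbb{A}}_r$ by elements of $R(\overline{\mathbb{A}}_r)$ and invoking the spectral-set inequality then gives $\mathrm{Re}\,\Gamma_\epsilon(\alpha T)\ge 0$.

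The converse is the substance, and my plan would be to build a normal dilation from the positivity hypotheses. Concretely: show that $\sigma(T)\subseteq\overline{\mathbb{A}}_r$ together with $\mathrm{Re}\,\Gamma_\epsilon(\alpha T)\ge 0$ for all $\epsilon\in(0,1),\ \alpha\in\mathbb{T}$ forces the functional calculus $h\mapsto h(T)$ on $R(\overline{\mathbb{A}}_r)$ to be completely contractive, hence --- via Arveson's dilation theorem and the fact that $\partial\mathbb{A}_r$ is the Shilov boundary of $R(\overline{\mathbb{A}}_r)$ --- to admit a normal operator $N$ with $\sigma(N)\subseteq\partial\mathbb{A}_r$ dilating $T$; von Neumann's inequality on $\overline{\mathbb{A}}_r$ then drops out of the spectral theorem for $N$. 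The role of the family $\{\Gamma_\epsilon(\alpha\,\cdot)\}$ is to supply the positive-definiteness data for such a construction: positivity of $\mathrm{Re}\,\Gamma_\epsilon(\alpha T)$ for all $\epsilon,\alpha$ should be equivalent to positivity of a $\mathcal{B}(\mathcal{H})$-valued measure on $\partial\mathbb{A}_r$ representing the functional calculus.

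I expect the dilation step to be the main obstacle. The delicate point is that, as $\epsilon\downarrow 0$, $\Gamma_\epsilon$ only detects the ``diagonal'' boundary behavior on $\mathbb{A}_r$ --- equal sources at the \emph{same} argument on the inner and outer circles --- whereas a general admissible $g$ has independent boundary data on the two circles, coupled only through equality of total mass (the vanishing-period constraint forced by holomorphy). So no soft Herglotz argument at the level of scalar functions on $\overline{\mathbb{A}}_r$ can close the gap, and one must genuinely use the operator structure: the positivity of $\mathrm{Re}\,\Gamma_\epsilon(\alpha T)$ across the \emph{whole} $\epsilon$-scale --- a $q$-series/theta-function mechanism --- together with the positivity automatically present in any Hilbert-space operator (the Gram matrices $[\langle T^jx,T^kx\rangle]_{j,k}\ge 0$). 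The concrete subtasks I foresee are: (i) from the $\epsilon$- and $\alpha$-families, manufacture for each vector the positive boundary data on $\partial\mathbb{A}_r$ matching the moments $\langle T^kx,x\rangle$; (ii) glue these into a single positive operator-valued measure, with the period bookkeeping arranged so that the harmonic conjugates remain single-valued; and (iii) recognize the resulting object as a normal dilation. Step (ii) is where Agler's original argument does its real work, and is where I would anticipate the genuine difficulty.
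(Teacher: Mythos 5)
The paper does not prove this statement at all: it is quoted verbatim from Agler's Annals paper (Theorem 1.23 of \cite{Agler}) and used as a black box, so there is no internal proof to compare yours against. Judged on its own terms, your proposal is an honest road map rather than a proof, and the gap sits exactly where you say it does. The reduction to the normal form ``$\sigma(T)\subseteq\overline{\mathbb{A}}_r$ and $\mathrm{Re}\,g(T)\ge 0$ for all admissible $g$'' via the Cayley transform is fine, and the necessity direction (each $\Gamma_\epsilon(\alpha\,\cdot)$ is an admissible $g$, so the spectral-set inequality forces $\mathrm{Re}\,\Gamma_\epsilon(\alpha T)\ge 0$) is essentially complete once one verifies $\mathrm{Re}\,\Gamma_\epsilon>0$ on $\overline{\mathbb{A}}_r$ --- though even there your geometric-series expansion of $\bigl(1+(1-\epsilon)^{2k}r^k\bigr)^{-1}$ produces alternating signs, so the ``absolutely convergent combination of elementary Herglotz kernels'' claim needs the theta-function/Villat-kernel identification to be carried out, not just asserted.

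The sufficiency direction, which is the entire content of the theorem, is not established by your outline. Two concrete problems: first, the route through Arveson's dilation theorem requires the functional calculus to be \emph{completely} contractive, and deducing complete contractivity from the scalar positivity family $\{\mathrm{Re}\,\Gamma_\epsilon(\alpha T)\ge 0\}$ is not a formal step --- for the annulus it is essentially equivalent to Agler's rational dilation theorem, i.e.\ the deepest result of the very paper you are citing. Second, and more to the point, the theorem only asserts the scalar von Neumann inequality, so the logically economical target is to pass from the $\Gamma_\epsilon$-positivity to $\|h(T)\|\le\|h\|_{\overline{\mathbb{A}}_r}$ directly; your own step (ii) --- gluing the $\epsilon$- and $\alpha$-indexed positivity data into a single positive operator-valued measure on $\partial\mathbb{A}_r$ with the period constraint respected --- is precisely the content of Agler's model theory for the family $\mathcal{F}_r$ (closure under restrictions and direct sums, identification of the extremal elements as normal operators with spectrum in $\partial\mathbb{A}_r$), and none of that machinery is reproduced or replaced in your proposal. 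Flagging the difficulty is not the same as resolving it, so as written this cannot be accepted as a proof of the statement; for the purposes of this paper the correct move is the one the authors make, namely to cite \cite{Agler}.
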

The aim of this short article is to investigate when a $2 \times 2$ block matrix of the form
$\begin{bmatrix}
		T_1 & Y \\
		0 & T_2
	\end{bmatrix}$
	becomes an $\mathbb A_r$-contraction. The motivation is to go parallel with the following famous result due to Douglas, Muhly and Pearcy.
	
\begin{thm}[\cite{Douglas}, Proposition 2.2]\label{Douglas}
	Let $T_1, T_2$ and $X$ be operators acting on a Hilbert space $\mathcal{H}$. Then $\widetilde{T}_X=\begin{bmatrix}
		T_1 & X \\
		0 & T_2
	\end{bmatrix}
	$
acting on $\mathcal{H}\oplus \mathcal{H}$ is a contraction if and only if $T_1$ and $T_2$ are contractions, and there is a contraction $C$ on $\mathcal{H}$ such that $X=D_{T_1^*}CD_{T_2}$. 
 \end{thm}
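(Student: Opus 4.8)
The plan is to prove Theorem~\ref{Douglas} by recasting the contractivity of $\widetilde{T}_X$ as a positivity statement for an operator matrix and then removing the ``trivial'' part of that positivity through successive Schur complements. First I would invoke the elementary equivalence, valid for any operator $S$ on a Hilbert space, that $\|S\|\le1$ precisely when $\begin{bmatrix} I & S^*\\ S & I\end{bmatrix}\ge0$. Taking $S=\widetilde{T}_X$ on $\mathcal{H}\oplus\mathcal{H}$, writing out the resulting $4\times4$ (over $\mathcal{H}$) matrix, and reorganizing its four copies of $\mathcal{H}$ into the order $1,3,2,4$, one finds that $\widetilde{T}_X$ is a contraction if and only if the block-tridiagonal matrix
\[
M:=\begin{bmatrix} I & T_1^* & 0 & 0\\ T_1 & I & X & 0\\ 0 & X^* & I & T_2^*\\ 0 & 0 & T_2 & I\end{bmatrix}
\]
is positive.

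Next I would pare $M$ down. Because its top-left block $I$ is invertible, $M\ge0$ is equivalent to positivity of its Schur complement with respect to that block, which a direct computation identifies with $\begin{bmatrix} I-T_1T_1^* & X & 0\\ X^* & I & T_2^*\\ 0 & T_2 & I\end{bmatrix}$; in particular the $(1,1)$ entry forces $I-T_1T_1^*\ge0$, i.e.\ $\|T_1\|\le1$. Taking now the Schur complement of this $3\times3$ matrix with respect to its (again invertible) bottom-right $I$, the condition $M\ge0$ is seen to be equivalent to
\[
\begin{bmatrix} I-T_1T_1^* & X\\ X^* & I-T_2^*T_2\end{bmatrix}=\begin{bmatrix} D_{T_1^*}^2 & X\\ X^* & D_{T_2}^2\end{bmatrix}\ge0,
\]
whose $(2,2)$ entry forces $\|T_2\|\le1$. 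Each of these two reductions is a genuine ``if and only if'' because the corner being eliminated is invertible.

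Finally I would appeal to the standard ``middle contraction'' lemma: for positive operators $A,B$ and an operator $X$ one has $\begin{bmatrix} A^2 & X\\ X^* & B^2\end{bmatrix}\ge0$ if and only if $X=ACB$ for some contraction $C$. The forward direction is the sandwich $\begin{bmatrix} A & 0\\ 0 & B\end{bmatrix}\begin{bmatrix} I & C\\ C^* & I\end{bmatrix}\begin{bmatrix} A & 0\\ 0 & B\end{bmatrix}\ge0$; for the converse one factors $\begin{bmatrix} A^2 & X\\ X^* & B^2\end{bmatrix}=\begin{bmatrix} V_1^*\\ V_2^*\end{bmatrix}\begin{bmatrix} V_1 & V_2\end{bmatrix}$ so that $V_1^*V_1=A^2$, $V_2^*V_2=B^2$, $V_1^*V_2=X$, writes the polar decompositions $V_1=U_1A$ and $V_2=U_2B$, and sets $C:=U_1^*U_2$. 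Applying this with $A=D_{T_1^*}$ and $B=D_{T_2}$ gives exactly $X=D_{T_1^*}CD_{T_2}$ with $\|C\|\le1$, and assembling the three steps yields the theorem.

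I expect the sole delicate point to be the converse half of the lemma in the last step: since $D_{T_1^*}$ and $D_{T_2}$ are in general not invertible and may have non-closed range, one cannot simply put $C=A^{-1}XB^{-1}$ and must instead rely on the polar decomposition (equivalently, Douglas's range-inclusion lemma) both to produce the partial isometries $U_1,U_2$ and to check that $U_1^*U_2$ is a well-defined contraction on the relevant range closures. The rest---the reordering of summands in the first step and the two Schur-complement identities---is routine verification.
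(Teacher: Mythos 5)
Your argument is correct, but note that the paper itself offers no proof of Theorem~\ref{Douglas}: it is quoted verbatim from Douglas--Muhly--Pearcy, so there is nothing internal to compare against. Your chain of reductions is sound at every step. The identification of $\bigl[\begin{smallmatrix} I & S^*\\ S & I\end{smallmatrix}\bigr]\ge0$ with $\|S\|\le1$, the canonical reordering of the four copies of $\mathcal{H}$ producing the tridiagonal $M$, and the two Schur complements (each a genuine equivalence because the pivot is the identity) correctly reduce contractivity of $\widetilde{T}_X$ to positivity of $\bigl[\begin{smallmatrix} D_{T_1^*}^2 & X\\ X^* & D_{T_2}^2\end{smallmatrix}\bigr]$, with $\|T_1\|\le1$ and $\|T_2\|\le1$ read off the diagonal. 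Your final ``middle contraction'' lemma is exactly Theorem~\ref{Ando} of the paper (the Kian--Moslehian--Xu positivity criterion, with $P=D_{T_1^*}^2$, $Q=D_{T_2}^2$, $R=X$), and the same two-step scheme --- pass to $\operatorname{Re}$ of an operator pencil, then factor the off-diagonal entry through square roots of the diagonal via that lemma --- is precisely what the authors run in Theorems~\ref{block_1} and~\ref{thm:main-2} for the annulus. You are also right that the only delicate point is the non-invertibility of $D_{T_1^*}$ and $D_{T_2}$, which your use of the polar decomposition (rather than a naive $C=A^{-1}XB^{-1}$) handles correctly; the partial isometries $U_1,U_2$ have initial spaces $\overline{\operatorname{ran}}\,D_{T_1^*}$ and $\overline{\operatorname{ran}}\,D_{T_2}$, so $C=U_1^*U_2$ is a well-defined contraction and $D_{T_1^*}U_1^*U_2D_{T_2}=V_1^*V_2=X$ as claimed.
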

The first step in this direction for an annulus was taken by Misra \cite{Misra}, where he considered a $2 \times 2$ scalar matrix.
%--------------------------------------------------------------------------------------
\begin{thm}[\cite{Misra}, Corollary 1.1]\label{Misra} Let $w \in \mathbb{A}_r$ and let $h(w)$ be a function of $w$. Then $ \overline{\mathbb{A}}_r$ is a spectral set for
	$\begin{pmatrix}
		w & h(w) \\
		0 & w 
	\end{pmatrix} $ if and only if 
$
	|h(w)| \leq \widehat{K}_r(\overline{w},w)^{-1},
$
where $\widehat{K}_r(z, w)$ is the reproducing kernel function of the Hardy space $H^2(\mathbb{A}_r)$ and is given by
$
	\widehat{K}_r(z, w)=\overset{\infty}{\underset{n=-\infty}{\sum}}\dfrac{(z\overline{w})^n}{1+r^{2n}}$ for $z,w \in \mathbb A_r$ .
\end{thm}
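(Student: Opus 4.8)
The plan is to reduce the spectral--set condition to von Neumann's inequality, apply the scalar case of the Douglas--Muhly--Pearcy criterion to turn it into a one--point extremal problem, and identify the extremal constant with the reproducing kernel of $H^2(\mathbb{A}_r)$. Write $M=\begin{pmatrix} w & h(w)\\ 0 & w\end{pmatrix}$ on $\mathbb{C}^2$. Since $\sigma(M)=\{w\}\subseteq\mathbb{A}_r\subseteq\overline{\mathbb{A}}_r$, the set $\overline{\mathbb{A}}_r$ is a spectral set for $M$ exactly when von Neumann's inequality \eqref{eqn1} holds. The functional calculus for the Jordan block $M$ gives $f(M)=\begin{pmatrix} f(w) & f'(w)h(w)\\ 0 & f(w)\end{pmatrix}$ for every rational $f$ with poles off $\overline{\mathbb{A}}_r$; as such rational functions are uniformly dense in the annulus algebra $A(\overline{\mathbb{A}}_r)=\{f\in C(\overline{\mathbb{A}}_r):f\ \text{holomorphic on}\ \mathbb{A}_r\}$ and $f\mapsto(f(w),f'(w))$ is continuous for uniform convergence on $\overline{\mathbb{A}}_r$, homogeneity gives: $\overline{\mathbb{A}}_r$ is a spectral set for $M$ iff $\begin{pmatrix} f(w) & f'(w)h(w)\\ 0 & f(w)\end{pmatrix}$ is a contraction for all $f\in A(\overline{\mathbb{A}}_r)$ with $\|f\|_{\overline{\mathbb{A}}_r}\le1$.

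Next I would use the scalar case of Theorem~\ref{Douglas}: $\begin{pmatrix} a & b\\ 0 & a\end{pmatrix}$ is a contraction iff $|a|\le1$ and $|b|\le 1-|a|^2$. So the condition becomes $|f'(w)|\,|h(w)|\le 1-|f(w)|^2$ for all $f$ in the unit ball of $A(\overline{\mathbb{A}}_r)$. Passing to $g=\dfrac{f-f(w)}{1-\overline{f(w)}f}$ --- which keeps the supremum norm $\le1$, forces $g(w)=0$, and satisfies $|g'(w)|=|f'(w)|/(1-|f(w)|^2)$, while conversely every $g$ in the unit ball with $g(w)=0$ arises this way --- the condition is equivalent to $|g'(w)|\,|h(w)|\le1$ for all such $g$. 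Hence
\[
\overline{\mathbb{A}}_r\ \text{is a spectral set for}\ M\iff |h(w)|\le \frac{1}{M(w)},\qquad M(w):=\sup\bigl\{\,|g'(w)|:g\in A(\overline{\mathbb{A}}_r),\ \|g\|_{\overline{\mathbb{A}}_r}\le1,\ g(w)=0\,\bigr\}.
\]

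The heart of the matter is the identity $M(w)=\widehat{K}_r(\overline{w},w)$. The constraint $g(w)=0$ may be dropped (the range--M\"obius transform only enlarges $|g'(w)|$ when $g(w)\ne0$) and, after multiplying $g$ by a unimodular constant, $M(w)=\sup\{\operatorname{Re} g'(w):g:\mathbb{A}_r\to\overline{\mathbb{D}}\ \text{holomorphic}\}$ is the Carath\'eodory extremal quantity at $w$, attained by the Ahlfors function $F_w$ of $\mathbb{A}_r$ (the proper degree--$2$ map $\mathbb{A}_r\to\mathbb{D}$ with $F_w(w)=0$, $F_w'(w)>0$). By the classical Ahlfors--Garabedian--Szeg\H{o} theory for finitely connected domains, $F_w=S(\cdot,w)/L(\cdot,w)$ with $S$ the Szeg\H{o} kernel of $\mathbb{A}_r$ and $L(z,w)=\dfrac{1}{2\pi(z-w)}+(\text{holomorphic in }z)$ the Garabedian kernel; expanding near $z=w$ gives $F_w(w)=0$ and $M(w)=F_w'(w)=2\pi S(w,w)$, which a direct computation with the explicit Szeg\H{o} kernel of $\mathbb{A}_r$ (in the normalization whose reproducing kernel is $\widehat{K}_r$) identifies with $\widehat{K}_r(\overline{w},w)=\sum_{n=-\infty}^{\infty}\dfrac{|w|^{2n}}{1+r^{2n}}$. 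Combining the three steps gives the asserted equivalence.

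The routine ingredients are the Jordan--block calculus, the scalar Douglas--Muhly--Pearcy criterion, and the range--M\"obius reduction; the substantive point, and the only place where the annulus enters essentially, is the evaluation of $M(w)$ --- in particular pinning the constant down to exactly $\widehat{K}_r(\overline{w},w)$, which amounts to matching the boundary measure defining $H^2(\mathbb{A}_r)$. Two routes seem workable. One is the Ahlfors--Garabedian computation above (equivalently, writing the Ahlfors map of $\mathbb{A}_r$ explicitly through elliptic theta functions and differentiating at $w$, via the theta identity for $\sum_n x^n/(1+r^{2n})$). The other, staying inside the framework of this paper, is to start from Agler's criterion (Theorem~\ref{Agler_pencil}): since $\Gamma_\epsilon(\alpha M)=\begin{pmatrix}\Gamma_\epsilon(\alpha w) & \alpha\,\Gamma_\epsilon'(\alpha w)h(w)\\ 0 & \Gamma_\epsilon(\alpha w)\end{pmatrix}$ and $\operatorname{Re}\Gamma_\epsilon(\alpha w)\ge0$ holds automatically (the scalar $w$ being an $\mathbb{A}_r$-contraction), $M$ is an $\mathbb{A}_r$-contraction iff $|h(w)|\le 2\operatorname{Re}\Gamma_\epsilon(\alpha w)/|\Gamma_\epsilon'(\alpha w)|$ for all $\epsilon\in(0,1)$ and $\alpha\in\mathbb{T}$, and it then remains to prove $\inf_{\epsilon,\alpha}\,2\operatorname{Re}\Gamma_\epsilon(\alpha w)/|\Gamma_\epsilon'(\alpha w)|=\widehat{K}_r(\overline{w},w)^{-1}$ by optimizing Agler's series over $\arg(\alpha w)$ and letting $\epsilon\to0$. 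I expect this last identity --- in either route --- to be the main obstacle; the remainder of the argument is forced.
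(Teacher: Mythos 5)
First, a point of comparison: the paper offers no proof of this statement at all --- it is imported verbatim as a citation of Misra's Corollary 1.1 --- so there is no in-paper argument to measure your proposal against; it has to stand on its own. Your reduction is correct and cleanly executed: the Jordan-block functional calculus $f(M)=\begin{pmatrix} f(w) & f'(w)h(w)\\ 0 & f(w)\end{pmatrix}$, the scalar criterion $\|\begin{pmatrix} a & b\\ 0 & a\end{pmatrix}\|\le 1\iff |a|\le 1,\ |b|\le 1-|a|^2$, and the range--M\"obius normalization together show that $\overline{\mathbb{A}}_r$ is a spectral set for $M$ if and only if $|h(w)|\le M(w)^{-1}$, where $M(w)$ is the Carath\'eodory extremal value $\sup\{|g'(w)|:\|g\|_{\overline{\mathbb{A}}_r}\le 1,\ g(w)=0\}$. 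But everything after that is announcement rather than proof: the identity $M(w)=\widehat{K}_r(\overline{w},w)$ is exactly the content of the theorem, and you yourself flag it as ``the main obstacle'' while offering two unexecuted routes. As submitted, the proposal proves only the reduction, not the theorem.

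Worse, the one route you sketch in any detail cannot close the gap as written, because of the normalization you wave away. Garabedian's identity $F_w'(w)=2\pi S(w,w)$ is tied to the Szeg\H{o} kernel for \emph{arc length} on $\partial\mathbb{A}_r$, for which $\|z^n\|^2=2\pi(1+r^{2n+1})$, so that route yields $M(w)=\sum_{n=-\infty}^{\infty}|w|^{2n}/(1+r^{2n+1})$, not $\sum_{n=-\infty}^{\infty}|w|^{2n}/(1+r^{2n})=\widehat{K}_r(\overline{w},w)$; the parenthetical ``in the normalization whose reproducing kernel is $\widehat{K}_r$'' is not available to you, since changing the boundary measure destroys the Garabedian identity. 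And the two candidate constants genuinely disagree: your own M\"obius test function $f(z)=(z-w)/(1-\overline{w}z)$ already forces $M(w)\ge (1-|w|^2)^{-1}$, while for $r=1/4$ and $w=1/2$ one computes $\widehat{K}_r(\overline{w},w)=\sum_n (1/4)^n/(1+(1/16)^n)\approx 1.14 < 4/3=(1-|w|^2)^{-1}$. So with the kernel formula as printed in the statement, the claimed bound $|h(w)|\le\widehat{K}_r(\overline{w},w)^{-1}$ admits values of $h(w)$ (e.g.\ $h(w)=5/6$) for which $\|f(M)\|=10/9>1=\sup_{\overline{\mathbb{A}}_r}|f|$, i.e.\ the ``if'' direction fails. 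The conclusion is that the identity you need is false for the kernel as transcribed here --- most likely the normalization of $H^2(\mathbb{A}_r)$ in this quotation of Misra does not match the displayed series --- and any correct proof must first pin down which boundary measure actually produces the extremal constant (the arc-length one, by Garabedian) and then verify the series, rather than asserting the match by fiat.
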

In Theorems \ref{block_1} \& \ref{thm:main-2} of this paper, we find a necessary and sufficient condition such that each of the following two block matrices
\begin{equation} \label{eqn:main-1}
T_X=\begin{bmatrix}
	T & X\\
	0 & T\\
\end{bmatrix}, \quad \widehat{T}_X=\begin{bmatrix}
	T_1 & X(T_1-T_2)\\
	0 & T_2\\
\end{bmatrix}
\end{equation}
becomes an $\mathbb A_r$-contraction. Here we assume that $X$ commutes with $T,T_1$ and $T_2$. Obviously, at this point we do not have an answer for the general case $\begin{bmatrix}
	T_1 & Y\\
	0 & T_2\\
\end{bmatrix}
$. However, $\widetilde{T}_Y=\widehat{T}_X$ when the operator equation $Y=X(T_1-T_2)$ has a solution for $X$. Thus, the problem is resolved when $T_1-T_2$ is invertible.

\vspace{0.2cm}
%---------------------------------------------------------------------------------------
	\section{The main theorems}
	
	\vspace{0.2cm}
	
\noindent We begin with a few preparatory results. Our aim is to study when $T_X$ and $\widehat{T}_X$ are $\mathbb A_r$-contractions, where $T_X$ and $\widehat{T}_X$ are as in (\ref{eqn:main-1}).	
	
	\begin{lem}\label{action}
			Let $T$ be an $\mathbb{A}_r$-contraction acting on a Hilbert space $\mathcal{H}$ and let $X \in \mathcal{B}(\mathcal{H})$ be such that $TX=XT$. Then 
	\[
	 f(T_X)=\begin{bmatrix}
			f(T) & Xf'(T)\\
			0 & f(T)\\
		\end{bmatrix}
		\]
	for every rational function $f$ with poles off $\overline{\mathbb A}_r$, where $T_X$ is as in $(\ref{eqn:main-1})$.	
	\end{lem}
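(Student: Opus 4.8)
The plan is to reduce the identity to the two standard cases — monomials $f(z)=z^n$ for $n\ge 0$ and $n<0$ — and then extend by linearity and by the partial-fraction decomposition of a general rational function with poles off $\overline{\mathbb A}_r$. First I would treat $f(z)=z^n$ with $n\ge 0$. Since $TX=XT$, a direct induction on $n$ gives
\[
T_X^n=\begin{bmatrix} T^n & nXT^{n-1}\\ 0 & T^n\end{bmatrix}
=\begin{bmatrix} T^n & X(z^n)'|_{z=T}\\ 0 & T^n\end{bmatrix},
\]
the base case $n=0,1$ being immediate and the inductive step using $T_X^{n+1}=T_X^n\cdot T_X$ together with $XT=TX$. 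This establishes the claim for polynomials by linearity.

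Next I would handle negative powers, which is where the slightly delicate point lies: one must check that $T_X$ is invertible and compute $T_X^{-1}$. Because $T$ is an $\mathbb A_r$-contraction, $\sigma(T)\subseteq\overline{\mathbb A}_r$, so $0\notin\sigma(T)$ and $T^{-1}$ exists. A block computation shows
\[
T_X^{-1}=\begin{bmatrix} T^{-1} & -T^{-1}XT^{-1}\\ 0 & T^{-1}\end{bmatrix}
=\begin{bmatrix} T^{-1} & X\cdot(-T^{-2})\\ 0 & T^{-1}\end{bmatrix},
\]
again using $XT=TX$ (hence $XT^{-1}=T^{-1}X$), which is exactly the asserted formula for $f(z)=z^{-1}$. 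Then induction on $n\ge 1$, now multiplying $T_X^{-(n+1)}=T_X^{-n}\cdot T_X^{-1}$, yields $T_X^{-n}=\begin{bmatrix} T^{-n} & -nXT^{-n-1}\\ 0 & T^{-n}\end{bmatrix}$, matching $X(z^{-n})'|_{z=T}$.

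Finally I would assemble the general case. A rational function $f$ with poles off $\overline{\mathbb A}_r$ can be written (after polynomial division and partial fractions) as a linear combination of a polynomial in $z$, a polynomial in $z^{-1}$ (if a pole sits at $0$, which is allowed since $0\notin\overline{\mathbb A}_r$), and terms $(z-\lambda)^{-k}$ with $\lambda\notin\overline{\mathbb A}_r$. The first two types are covered by the computations above; for the third, note $(T_X-\lambda I)^{-1}$ exists because $\lambda\notin\sigma(T)=\sigma(T_X)$ (the spectrum of the block-triangular $T_X$ is $\sigma(T)$), and the same block inversion as before gives $(T_X-\lambda I)^{-1}=\begin{bmatrix}(T-\lambda)^{-1} & -X(T-\lambda)^{-2}\\ 0 & (T-\lambda)^{-1}\end{bmatrix}$, with higher powers handled by the same inductive multiplication. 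Since differentiation, the off-diagonal map $g\mapsto Xg'(T)$, and the diagonal map $g\mapsto g(T)$ are all linear, summing the partial-fraction pieces gives the stated formula for arbitrary $f$. The only real thing to be careful about is bookkeeping in the induction — confirming that the Leibniz-type term $nXT^{n-1}$ (resp. its negative-power analogue) is reproduced at each step — and that $\sigma(T_X)=\sigma(T)$ so that $f(T_X)$ makes sense precisely when $f(T)$ does; neither presents a genuine obstacle.
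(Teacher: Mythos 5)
Your argument is correct, but it takes a genuinely different route from the paper's. You decompose $f$ by partial fractions into a polynomial plus terms $(z-\lambda)^{-k}$ with $\lambda\notin\overline{\mathbb A}_r$, verify the formula on each elementary piece by induction (positive powers, negative powers, shifted resolvents), and then sum using the linearity of $g\mapsto g(T)$ and $g\mapsto Xg'(T)$. The paper instead writes $f=p/q$, establishes the polynomial case exactly as you do, and then inverts $q(T_X)$ in block form as $\begin{bmatrix} q(T)^{-1} & -Xq(T)^{-2}q'(T)\\ 0 & q(T)^{-1}\end{bmatrix}$; multiplying by $p(T_X)$ reproduces the quotient rule $f'=p'q^{-1}-pq^{-2}q'$ in the off-diagonal entry. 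The paper's computation is shorter and avoids partial fractions altogether, at the cost of having to justify the invertibility of $q(T_X)$ (which, as in your argument, follows from $\sigma(T_X)\subseteq\sigma(T)\subseteq\overline{\mathbb A}_r$ together with the spectral mapping theorem, since $q$ has no zeros on $\overline{\mathbb A}_r$); your route involves more bookkeeping but makes the contribution of each pole explicit. One small remark: you assert $\sigma(T_X)=\sigma(T)$ --- equality does in fact hold for this particular block form, but only the inclusion $\sigma(T_X)\subseteq\sigma(T)$ is needed for $f(T_X)$ to be defined, and that is the standard fact for block upper-triangular operators that the paper cites.
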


	\begin{proof}
		Let $p(z)=a_0+a_1z+a_2z^2+\dotsc + a_nz^n$ be any polynomial in $\C[z]$. Then
		\begin{equation*}
			\begin{split}
				p(T_X)&=a_0\begin{bmatrix}
					I & 0\\
					0 & I\\
				\end{bmatrix} +
				a_1\begin{bmatrix}
					T & X\\
					0 & T\\
				\end{bmatrix}+
				a_2\begin{bmatrix}
					T^2 & 2XT\\
					0 & T^2\\
				\end{bmatrix}+
				\dotsc+
				a_n\begin{bmatrix}
					T^n & nXT^{n-1}\\
					0 & T^n\\
				\end{bmatrix}\\
				&=\begin{bmatrix}
					p(T) & Xp'(T)\\
					0 & p(T)\\
				\end{bmatrix}.
			\end{split}
		\end{equation*}
		Let $f=p\slash q$ be a rational function with poles off $\overline{\mathbb{A}}_r.$
		The holomorphic functional calculus \cite{TaylorI} yields that 
		\begin{equation*}
			\begin{split}
				f(T_X)=p(T_X)q(T_X)^{-1}&=\begin{bmatrix}
					p(T) & Xp'(T)\\
					0 & p(T)\\
				\end{bmatrix}\begin{bmatrix}
					q(T) & Xq'(T)\\
					0 & q(T)\\
				\end{bmatrix}^{-1}\\
				&=\begin{bmatrix}
					p(T) & Xp'(T)\\
					0 & p(T)\\
				\end{bmatrix}\begin{bmatrix}
					q(T)^{-1} & -Xq(T)^{-2}q'(T)\\
					0 & q(T)^{-1}\\
				\end{bmatrix}\\
				&=\begin{bmatrix}
					p(T)q(T)^{-1} & X\bigg(p'(T)q(T)^{-1}-p(T)q(T)^{-2}q'(T)\bigg)\\
					0 & p(T)q(T)^{-1}\\
				\end{bmatrix}\\
				&=\begin{bmatrix}
					f(T) & Xf'(T)\\
					0 & f(T)\\
				\end{bmatrix}.\\
			\end{split}
		\end{equation*}
This completes the proof.
	\end{proof}

In a similar fashion, we prove an analogous result for the block matrix $\widehat{T}_X$.
	
	\begin{lem}\label{action2}
		Let $T_1, T_2$ be $\mathbb{A}_r$-contractions and let $X$ be an operator which commutes with $T_1$ and $T_2$. Then 
		\[
		 f(\widehat{T}_X)=\begin{bmatrix}
			f(T_1) & X\left(f(T_1)-f(T_2)\right)\\
			0 & f(T_2)\\
		\end{bmatrix}
		\]
		for every rational function $f$ with poles off $\overline{\mathbb A}_r$, where $\widehat{T}_X$ is as in $(\ref{eqn:main-1})$.
	\end{lem}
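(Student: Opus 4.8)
The plan is to mimic the proof of Lemma \ref{action} but to track a different upper-triangular structure. First I would check the claim on polynomials. Since $X$ commutes with both $T_1$ and $T_2$, a direct computation gives, for $p(z)=z^n$,
\[
\begin{bmatrix} T_1 & X(T_1-T_2) \\ 0 & T_2 \end{bmatrix}^{\! n}
= \begin{bmatrix} T_1^n & X\!\left(T_1^n-T_2^n\right) \\ 0 & T_2^n \end{bmatrix},
\]
which follows by induction using the telescoping identity $T_1^{n} - T_2^{n} = T_1(T_1^{n-1}-T_2^{n-1}) + (T_1-T_2)T_2^{n-1}$; linearity then extends this to all $p\in\C[z]$. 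The upshot is that $z\mapsto \widehat{T}_X$ intertwines the ``divided-difference'' structure: the $(1,2)$-entry of $p(\widehat{T}_X)$ is $X(p(T_1)-p(T_2))$.

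Next I would pass to rational functions $f=p/q$ with poles off $\overline{\mathbb A}_r$ via the holomorphic functional calculus, exactly as in Lemma \ref{action}. The key subcomputation is the inverse of $q(\widehat{T}_X)$: since
\[
q(\widehat{T}_X)=\begin{bmatrix} q(T_1) & X(q(T_1)-q(T_2)) \\ 0 & q(T_2) \end{bmatrix}
\]
and $X$ commutes with $q(T_1),q(T_2)$ (as these are rational functions of $T_1,T_2$), one verifies
\[
q(\widehat{T}_X)^{-1}=\begin{bmatrix} q(T_1)^{-1} & -X\!\left(q(T_1)^{-1}-q(T_2)^{-1}\right) \\ 0 & q(T_2)^{-1} \end{bmatrix},
\]
noting that $q(T_1)^{-1}q(T_2)^{-1}(q(T_1)-q(T_2)) = q(T_2)^{-1}-q(T_1)^{-1}$. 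Multiplying $p(\widehat{T}_X)$ by this inverse and collecting the $(1,2)$-entry, all terms assemble into $X\!\left(p(T_1)q(T_1)^{-1}-p(T_2)q(T_2)^{-1}\right)=X(f(T_1)-f(T_2))$, giving the claimed formula.

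There is no serious obstacle here; the only point requiring a little care is that the holomorphic functional calculus applies to $\widehat{T}_X$ — this needs $\sigma(\widehat{T}_X)\subseteq\overline{\mathbb A}_r$, which holds because the spectrum of an upper-triangular operator matrix with diagonal $T_1,T_2$ is contained in $\sigma(T_1)\cup\sigma(T_2)\subseteq\overline{\mathbb A}_r$ (each $T_i$ being an $\mathbb A_r$-contraction), so $q(\widehat{T}_X)$ is indeed invertible whenever $q$ has no zeros in $\overline{\mathbb A}_r$. The remaining work is the routine bookkeeping of the block multiplications, which I would present in a single displayed chain of equalities as in the previous lemma.
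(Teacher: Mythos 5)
Your proposal follows essentially the same route as the paper's proof: compute $\widehat{T}_X^{\,n}$ by induction, extend to polynomials by linearity, invert $q(\widehat{T}_X)$ as a block upper-triangular matrix using the commutativity of $X$ with $q(T_1),q(T_2)$, and multiply out; your added remark that $\sigma(\widehat{T}_X)\subseteq\sigma(T_1)\cup\sigma(T_2)\subseteq\overline{\mathbb{A}}_r$ justifies applying the functional calculus and is a point the paper leaves implicit in this lemma. The only blemish is a sign slip: since $-q(T_1)^{-1}X\left(q(T_1)-q(T_2)\right)q(T_2)^{-1}=-X\left(q(T_2)^{-1}-q(T_1)^{-1}\right)=+X\left(q(T_1)^{-1}-q(T_2)^{-1}\right)$ (exactly the identity you quote), the $(1,2)$ entry of $q(\widehat{T}_X)^{-1}$ should carry no leading minus sign; with that correction the final block multiplication does assemble to $X\left(f(T_1)-f(T_2)\right)$ as claimed.
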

	
	\begin{proof}
		For any $n \in \mathbb{N} \cup \{0\}$, we have  
		\[
		\widehat{T}_X^n=\begin{bmatrix}
			T_1^n & X(T_1^n-T_2^n)\\
			0 & T_2^n\\
		\end{bmatrix}. 
	\]
For any polynomial $p \in \C[z]$, we have 	
	\[
		p(\widehat{T}_X)=\begin{bmatrix}
		p(T_1) & X(p(T_1)-p(T_2))\\
		0 & p(T_2)\\
	\end{bmatrix}.
	\]
A straight-forward computation shows that 
\[
p(\widehat{T}_X)^{-1}=\begin{bmatrix}
	p(T_1)^{-1} & -p(T_1)^{-1}X(p(T_1)-p(T_2))p(T_2)^{-1}\\
	0 & p(T_2)^{-1}\\
\end{bmatrix}.
\]
Since $X$ and $T$ commutes, $p(\widehat{T}_X)^{-1}$ can be re-written as 
\[
p(\widehat{T}_X)^{-1}=\begin{bmatrix}
	p(T_1)^{-1} & X(p(T_1)^{-1}-p(T_2)^{-1})\\
	0 & p(T_2)^{-1}\\
\end{bmatrix}.
\]
Let $f=p\slash q$ be a rational function with poles off $\overline{\mathbb{A}}_r$. then we have
 
\begin{equation*}
	\begin{split}
				 f(\widehat{T}_X)=p(\widehat{T}_X)q(\widehat{T}_X)^{-1}
		 &=\begin{bmatrix}
		 	p(T_1) & X(p(T_1)-p(T_2))\\
		 	0 & p(T_2)\\
		 \end{bmatrix}
	 \begin{bmatrix}
		 q(T_1)^{-1} & X(q(T_1)^{-1}-q(T_2)^{-1})\\
		 0 & q(T_2)^{-1}\\
	 \end{bmatrix}\\
	 &=\begin{bmatrix}
	 	p(T_1)q(T_1)^{-1} & X(p(T_1)q(T_1)^{-1}-p(T_1)q(T_2)^{-1})\\
	 	0 & p(T_2)q(T_2)^{-1}\\
	 \end{bmatrix}\\
	& =\begin{bmatrix}
	f(T_1) & X\left(f(T_1)-f(T_2)\right)\\
	0 & f(T_2)\\
\end{bmatrix}
\end{split}
\end{equation*} 
and the proof is complete.

	\end{proof}

	%-----------------------------------------------------------
	\noindent Also, the following popular result from the literature will be useful in the proof of the main theorems.    
	%------------------------------------------------------------------
	\begin{thm}[\cite{Sal}, Theorem 5.11]\label{Ando} Let $P$ and $Q$ be two positive operators on a Hilbert space $\mathcal{H}$ and $R \in \mathcal{B}(\mathcal{H})$. Then the following are equivalent.
		\begin{enumerate}
			\item The block matrix $\begin{bmatrix}
				P & R \\ R^* & Q \\
			\end{bmatrix}$ is positive;
			\item there exists a contraction operator $K$ on $\mathcal{H}$ such that $R = P^{1/2}KQ^{1/2}.$
		\end{enumerate}
	\end{thm}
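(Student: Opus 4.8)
The plan is to prove the two implications of the equivalence separately, following the standard argument for factorizing a positive $2\times 2$ operator matrix.

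For the implication $(2)\Rightarrow(1)$, suppose $R=P^{1/2}KQ^{1/2}$ for some contraction $K$. First I would record the elementary fact that $\begin{bmatrix} I & K \\ K^* & I\end{bmatrix}\ge 0$ whenever $\|K\|\le 1$: for $x,y\in\mathcal{H}$,
\[
\left\langle \begin{bmatrix} I & K \\ K^* & I\end{bmatrix}\begin{bmatrix} x\\ y\end{bmatrix}, \begin{bmatrix} x\\ y\end{bmatrix}\right\rangle=\|x\|^2+\|y\|^2+2\,\mathrm{Re}\,\langle Ky,x\rangle\ge \big(\|x\|-\|y\|\big)^2\ge 0.
\]
Then, writing $D=\begin{bmatrix} P^{1/2} & 0\\ 0 & Q^{1/2}\end{bmatrix}$, which is self-adjoint, a direct computation gives
\[
\begin{bmatrix} P & R\\ R^* & Q\end{bmatrix}=D^*\begin{bmatrix} I & K\\ K^* & I\end{bmatrix}D,
\]
and since $D^*MD\ge 0$ whenever $M\ge 0$, the block matrix is positive. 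This direction presents no real difficulty.

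For the implication $(1)\Rightarrow(2)$, the first step is to turn positivity of the block matrix into a Cauchy--Schwarz-type estimate. Positivity yields $\langle Px,x\rangle+2\,\mathrm{Re}\,\langle Ry,x\rangle+\langle Qy,y\rangle\ge 0$ for all $x,y\in\mathcal{H}$; replacing $x$ by $e^{i\theta}tx$ with $t\in\mathbb{R}$ and choosing $\theta$ so that $e^{-i\theta}\langle Ry,x\rangle=-|\langle Ry,x\rangle|$ reduces this to the scalar quadratic inequality $t^2\langle Px,x\rangle-2t\,|\langle Ry,x\rangle|+\langle Qy,y\rangle\ge 0$ for every $t\in\mathbb{R}$, whose discriminant must be nonpositive, so
\[
|\langle Ry,x\rangle|^2\le \langle Px,x\rangle\,\langle Qy,y\rangle=\|P^{1/2}x\|^2\,\|Q^{1/2}y\|^2 .
\]
Next I would use this estimate to define a sesquilinear form $B$ on $\operatorname{ran}P^{1/2}\times\operatorname{ran}Q^{1/2}$ by $B(Q^{1/2}y,P^{1/2}x)=\langle Ry,x\rangle$. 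The displayed inequality shows at once that $B$ is well defined (independent of the chosen representatives $x,y$) and bounded by $1$, hence it extends by continuity to a sesquilinear form of norm at most $1$ on $\overline{\operatorname{ran}P^{1/2}}\times\overline{\operatorname{ran}Q^{1/2}}$. The Riesz representation theorem then produces a contraction $K_0\colon\overline{\operatorname{ran}Q^{1/2}}\to\overline{\operatorname{ran}P^{1/2}}$ with $B(v,u)=\langle K_0v,u\rangle$; extending $K_0$ by zero on the orthogonal complements gives a contraction $K\in\mathcal{B}(\mathcal{H})$, and unwinding the definitions yields $\langle Ry,x\rangle=\langle P^{1/2}KQ^{1/2}y,x\rangle$ for all $x,y$, i.e. $R=P^{1/2}KQ^{1/2}$.

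The main obstacle is this forward direction, and within it the care needed to check that $B$ is genuinely well defined and bounded on the (generally non-closed) ranges of $P^{1/2}$ and $Q^{1/2}$ before passing to their closures; in particular the degenerate cases $\langle Px,x\rangle=0$ or $\langle Qy,y\rangle=0$ must be treated separately, where the linear-in-$t$ inequality forces $\langle Ry,x\rangle=0$. Everything else is routine block-matrix manipulation.
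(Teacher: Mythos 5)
The paper offers no proof of this statement: it is quoted verbatim from the cited reference (Kian--Moslehian--Xu, Theorem 5.11) and used as a black box, so there is no in-paper argument to compare against. Your proof is correct and is the standard one for this factorization result: the direction $(2)\Rightarrow(1)$ follows from the congruence of the block matrix with $\begin{bmatrix} I & K\\ K^* & I\end{bmatrix}$ via $D=\operatorname{diag}(P^{1/2},Q^{1/2})$, and the direction $(1)\Rightarrow(2)$ from the Cauchy--Schwarz-type bound $|\langle Ry,x\rangle|\le \|P^{1/2}x\|\,\|Q^{1/2}y\|$ together with the Riesz representation of the induced form on $\overline{\operatorname{ran}Q^{1/2}}\times\overline{\operatorname{ran}P^{1/2}}$, extended by zero. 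You also correctly identify and handle the only delicate points, namely the well-definedness of the form on the generally non-closed ranges and the degenerate case where the quadratic in $t$ collapses to a linear function, so nothing is missing.
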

	
	\noindent Now we are in a position to present the two main theorems of this article.
%--------------------------------------------------------------------------

\begin{thm}\label{block_1}
		Let $T$ be an $\mathbb{A}_r$-contraction on a Hilbert space $\mathcal{H}$ and let $X \in \mathcal{B}(\mathcal{H})$ be such that $TX=XT$. Let $T_X$ acting on $\mathcal{H} \oplus \mathcal{H}$ be given by
		\[
		T_X=\begin{bmatrix}
			T & X\\
			0 & T\\
		\end{bmatrix}.
		\] 
		Then the following are equivalent.
		\begin{enumerate}
			\item $T_X$ is an $\mathbb{A}_r$-contraction;
			\item For every $\epsilon \in (0,1)$ and $\alpha \in\mathbb{T}$, there exists a contraction $K(\epsilon;\alpha)$ on $\mathcal{H}$ such that 
			\[
			\frac{X\Gamma'_\epsilon(\alpha T)}{2}=Re \Gamma_\epsilon(\alpha T)^{1\slash 2} K(\epsilon; \alpha) \ Re \Gamma_\epsilon(\alpha T)^{1\slash 2} \,;
			\]
			
			\item For every $\epsilon \in (0,1)$ and $\alpha \in\mathbb{T}$, there exists a  unitary $U(\epsilon;\alpha)$ on $\mathcal{H}\oplus \mathcal{H}$ such that 
			\[
			\frac{X\Gamma'_\epsilon(\alpha T)}{2}=\begin{bmatrix}
				Re \Gamma_\epsilon(\alpha T)^{1\slash 2} & 0\\
			\end{bmatrix}
			U(\epsilon; \alpha) \begin{bmatrix}
				Re \Gamma_\epsilon(\alpha T)^{1\slash 2}\\
				0\\
			\end{bmatrix}.
			\]
		\end{enumerate}
	Here, $Re \Gamma_\epsilon(\alpha T)^{1\slash 2}$ denotes the positive square root of $Re \Gamma_\epsilon(\alpha T)$ for $\epsilon \in (0,1)$ and $\alpha \in \T$.
	\end{thm}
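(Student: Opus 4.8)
The plan is to feed the block operator $T_X$ into Agler's criterion (Theorem~\ref{Agler_pencil}) and then translate the resulting operator positivity into corner-of-a-unitary form via the Smul'jan--Ando factorization (Theorem~\ref{Ando}). First I would verify the spectral hypothesis: $T_X$ is block upper triangular with both diagonal entries equal to $T$, so $\sigma(T_X)\subseteq\sigma(T)\subseteq\overline{\mathbb A}_r$ since $T$ is an $\mathbb A_r$-contraction. Hence, by Theorem~\ref{Agler_pencil}, $T_X$ is an $\mathbb A_r$-contraction if and only if $Re\,\Gamma_\epsilon(\alpha T_X)\ge 0$ for all $\epsilon\in(0,1)$ and $\alpha\in\mathbb T$. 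The next step is to compute $\Gamma_\epsilon(\alpha T_X)$ in block form. Arguing as in the proof of Lemma~\ref{action} --- or, equivalently, via the Cauchy integral of the holomorphic functional calculus together with the resolvent identity $(\lambda I-\alpha T_X)^{-1}=\begin{bmatrix} (\lambda-\alpha T)^{-1} & \alpha X(\lambda-\alpha T)^{-2}\\ 0 & (\lambda-\alpha T)^{-1}\end{bmatrix}$ (valid because $TX=XT$), and noting that the truncations of the Laurent series defining $\Gamma_\epsilon$ are rational with a single pole at $0\notin\overline{\mathbb A}_r$ and converge, together with their derivatives, uniformly on an annular neighbourhood of $\overline{\mathbb A}_r$ --- one obtains
\[
\Gamma_\epsilon(\alpha T_X)=\begin{bmatrix} \Gamma_\epsilon(\alpha T) & \alpha X\,\Gamma_\epsilon'(\alpha T)\\ 0 & \Gamma_\epsilon(\alpha T)\end{bmatrix},
\qquad
Re\,\Gamma_\epsilon(\alpha T_X)=\begin{bmatrix} Re\,\Gamma_\epsilon(\alpha T) & \tfrac12\,\alpha X\,\Gamma_\epsilon'(\alpha T)\\ \tfrac12\bigl(\alpha X\,\Gamma_\epsilon'(\alpha T)\bigr)^{*} & Re\,\Gamma_\epsilon(\alpha T)\end{bmatrix}.
\]

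For $(1)\Leftrightarrow(2)$: since $T$ itself is an $\mathbb A_r$-contraction, Theorem~\ref{Agler_pencil} makes $P_{\epsilon,\alpha}:=Re\,\Gamma_\epsilon(\alpha T)$ a positive operator on $\mathcal H$, so Theorem~\ref{Ando} applies with $P=Q=P_{\epsilon,\alpha}$ and $R=\tfrac12\,\alpha X\,\Gamma_\epsilon'(\alpha T)$: the displayed block matrix is positive if and only if there is a contraction $K_0$ on $\mathcal H$ with $\tfrac12\,\alpha X\,\Gamma_\epsilon'(\alpha T)=P_{\epsilon,\alpha}^{1/2}K_0 P_{\epsilon,\alpha}^{1/2}$. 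Multiplying by $\overline\alpha$ and setting $K(\epsilon;\alpha):=\overline\alpha K_0$ --- still a contraction, as $|\alpha|=1$ --- yields exactly the identity in $(2)$, and the converse is the same computation read backwards after multiplying $(2)$ through by $\alpha$.

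For $(2)\Leftrightarrow(3)$: this is the elementary $2\times2$ unitary dilation of a contraction. If $K$ is a contraction on $\mathcal H$, then $U=\begin{bmatrix} K & D_{K^{*}}\\ D_{K} & -K^{*}\end{bmatrix}$ is a unitary on $\mathcal H\oplus\mathcal H$ with $(1,1)$-corner $K$, so $\begin{bmatrix} A & 0\end{bmatrix}U\begin{bmatrix} A\\ 0\end{bmatrix}=AKA$ for every $A\in\mathcal B(\mathcal H)$; conversely the $(1,1)$-corner of any unitary on $\mathcal H\oplus\mathcal H$ is a contraction. Applying this with $A=Re\,\Gamma_\epsilon(\alpha T)^{1/2}$ shows that, for each fixed $\epsilon$ and $\alpha$, the factorization in $(2)$ (through a contraction $K(\epsilon;\alpha)$) is equivalent to the one in $(3)$ (through a unitary $U(\epsilon;\alpha)$).

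The only genuinely technical point is the functional-calculus computation of $\Gamma_\epsilon(\alpha T_X)$: one must justify that the block-triangular derivative formula of Lemma~\ref{action} persists for the non-rational symbol $\Gamma_\epsilon$, and keep careful track of the unimodular factor $\alpha$ that occurs in the $(1,2)$-entry of $\Gamma_\epsilon(\alpha T_X)$ but is absent from statements $(2)$ and $(3)$. Everything else is a direct assembly of Agler's theorem, the resolvent identity for upper-triangular operators, the positivity criterion of Theorem~\ref{Ando}, and the elementary unitary dilation of a contraction.
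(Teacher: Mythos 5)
Your proposal is correct and follows essentially the same route as the paper: Agler's criterion (Theorem \ref{Agler_pencil}), the block computation of $Re\,\Gamma_\epsilon(\alpha T_X)$ via Lemma \ref{action}, Theorem \ref{Ando} for $(1)\Leftrightarrow(2)$, and the Halmos block unitary for $(2)\Leftrightarrow(3)$. You are in fact slightly more careful than the paper on two points it elides: justifying that the derivative formula persists for the non-rational symbol $\Gamma_\epsilon$ by passing to rational truncations of the Laurent series, and tracking the unimodular factor $\alpha$ in the $(1,2)$-entry of $\Gamma_\epsilon(\alpha T_X)$ (which the paper's displayed formula omits), correctly observing that it can be absorbed into the contraction $K(\epsilon;\alpha)$ so the stated equivalence is unaffected.
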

	\begin{proof}
$(1) \iff (2)$.	Let $\epsilon \in (0,1)$ and $\alpha \in \mathbb{T}$. By Lemma \ref{action}, we have  
		\[
		\Gamma_\epsilon(\alpha T_X)=\begin{bmatrix}
			\Gamma_\epsilon(\alpha T) & X\Gamma'_\epsilon(\alpha T)\\
			0 & \Gamma_\epsilon(\alpha T)\\
		\end{bmatrix}
\, \, \text{and hence } \, \,		Re \Gamma_\epsilon(\alpha T_X)=
		\begin{bmatrix}
			Re \Gamma_\epsilon(\alpha T) & X\Gamma'_\epsilon(\alpha T)\slash 2\\
			(X \Gamma'_\epsilon(\alpha T))^*\slash 2 & Re \Gamma_\epsilon(\alpha T)\\
		\end{bmatrix}.
		\]

\vspace{0.3cm}
		
\noindent It follows from Theorem \ref{Agler_pencil} that $T_X$ is an $\mathbb{A}_r$-contraction if and only if $\sigma(T_X) \subseteq \overline{\mathbb{A}}_r$ and
	
\vspace{0.1cm}		
		\begin{equation}\label{eq:9.3.1}
		\begin{split}	
		\begin{bmatrix}
					Re \Gamma_\epsilon(\alpha T) & X\Gamma'_\epsilon(\alpha T)\slash 2\\
					(X \Gamma'_\epsilon(\alpha T))^*\slash 2 & Re \Gamma_\epsilon(\alpha T)\\
				\end{bmatrix}\geq 0 \ \ \mbox{for $\epsilon \in (0,1)$ and $\alpha \in \mathbb{T}.$}
			\end{split}
		\end{equation}
\vspace{0.1cm}	
	
	By Lemma 1 in \cite{Hong}, we have that $\sigma(T_X) \subseteq \sigma(T)\subseteq \overline{\mathbb{A}}_r$. Again by Theorem \ref{Agler_pencil}, $Re \Gamma_\epsilon(\alpha T) \geq 0$ for $\epsilon \in (0,1)$ and $\alpha \in \mathbb{T}$. It follows from Theorem \ref{Ando} that (\ref{eq:9.3.1}) holds if and only if there exists a contraction $K(\epsilon;\alpha)$ on $\mathcal{H}$ such that 
%----------------------------------------------------------------------------------		
\vspace{0.1cm}
		\[
		\frac{X\Gamma'_\epsilon(\alpha T)}{2}=Re \Gamma_\epsilon(\alpha T)^{1\slash 2} K(\epsilon; \alpha) Re \Gamma_\epsilon(\alpha T)^{1\slash 2}
		\]
\vspace{0.1cm}
%----------------------------------------------------------------------------------		
		for every $\epsilon \in (0,1)$ and $\alpha \in\mathbb{T}$.  
		
	\vspace{0.2cm}
		
\noindent $(2) \iff (3)$.	Let $U(\epsilon;\alpha)$ be the operator  on $\mathcal{H}\oplus \mathcal{H}$ given by
%----------------------------------------------------------------------------------		
	\vspace{0.1cm}
		\[
		U(\epsilon;\alpha)=\begin{bmatrix}
			K(\epsilon;\alpha) & (I-K(\epsilon;\alpha)K(\epsilon;\alpha)^*)^{1\slash 2}\\
			(I-K(\epsilon;\alpha)^*K(\epsilon;\alpha))^{1\slash 2} & -K(\epsilon;\alpha)^*
		\end{bmatrix}.
		\]
	\vspace{0.1cm}	
%----------------------------------------------------------------------------------		
		It is well-known that for any contraction $T$, the Halmos block $U=\begin{bmatrix}
	T & D_{T^*}\\
	D_T & -T^*
\end{bmatrix}$
is a unitary operator (see Chapter-I of \cite{Nagy} for details). Thus, it follows that each $U(\epsilon, \alpha)$ is a unitary. Now it is easy to see that 
		\[
\begin{bmatrix}
			Re \Gamma_\epsilon(\alpha T)^{1\slash 2} & 0\\
		\end{bmatrix}
		U(\epsilon; \alpha) \begin{bmatrix}
			Re \Gamma_\epsilon(\alpha T)^{1\slash 2}\\
			0\\
		\end{bmatrix}=Re \Gamma_\epsilon(\alpha T)^{1\slash 2} K(\epsilon; \alpha) Re \Gamma_\epsilon(\alpha T)^{1\slash 2}
		\]
		for every $\epsilon$ in $(0,1)$ and $\alpha \in \mathbb{T}.$ The desired conclusion now follows.
	\end{proof}

		%------------------------------------------------------
	\begin{thm} \label{thm:main-2}
		Let $T_1, T_2$ be $\mathbb{A}_r$-contractions on a Hilbert space $\mathcal{H}$ and let $X \in \mathcal{B}(\mathcal{H})$ commute with $T_1$ and $T_2$. Then, for the operator $\widehat{T}_X$ on $\mathcal{H}\oplus \mathcal{H}$ given by the $2 \times 2$ block matrix
		\[
		\widehat{T}_X=\begin{bmatrix}
			T_1 & X(T_1-T_2)\\
			0 & T_2\\
		\end{bmatrix},
		\]
		the following are equivalent:
		\begin{enumerate}
			\item $\widehat{T}_X$ is an $\mathbb{A}_r$-contraction;
			\item there exists a contraction operator $K(\epsilon;\alpha)$ on $\mathcal{H}$ such that 
			\[
			\frac{X\bigg(\Gamma_\epsilon(\alpha T_1)-\Gamma_\epsilon(\alpha T_2)\bigg)}{2}=Re \Gamma_\epsilon(\alpha T_1)^{1\slash 2} K(\epsilon; \alpha)\  Re \Gamma_\epsilon(\alpha T_2)^{1\slash 2}
			\]
			for every $\epsilon \in (0,1)$ and $\alpha \in\mathbb{T};$
			\item there exists a unitary $U(\epsilon;\alpha)$ on $\mathcal{H}\oplus \mathcal{H}$ such that 
			\[
			\frac{X\bigg(\Gamma_\epsilon(\alpha T_1)-\Gamma_\epsilon(\alpha T_2)\bigg)}{2}=\begin{bmatrix}
				Re \Gamma_\epsilon(\alpha T_1)^{1\slash 2} & 0\\
			\end{bmatrix}
			U(\epsilon; \alpha) \begin{bmatrix}
				Re \Gamma_\epsilon(\alpha T_2)^{1\slash 2}\\
				0\\
			\end{bmatrix}
			\]
			for every $\epsilon$ in $(0,1)$ and $\alpha \in \mathbb{T}.$
		\end{enumerate}
	\end{thm}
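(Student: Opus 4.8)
The plan is to mirror, almost verbatim, the three-step argument used to prove Theorem \ref{block_1}, replacing Lemma \ref{action} by Lemma \ref{action2} throughout and replacing the single positive operator $Re\,\Gamma_\epsilon(\alpha T)$ by the two (now generally distinct) positive operators $Re\,\Gamma_\epsilon(\alpha T_1)$ and $Re\,\Gamma_\epsilon(\alpha T_2)$ occupying the diagonal of the relevant block. Fix $\epsilon\in(0,1)$ and $\alpha\in\mathbb{T}$.

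For the implication $(1)\Leftrightarrow(2)$, I would first apply Lemma \ref{action2} with $f=\Gamma_\epsilon$ (its formula extends from rational functions with poles off $\overline{\mathbb{A}}_r$ to the symbol $\Gamma_\epsilon$ by uniform approximation by Laurent polynomials on a neighbourhood of $\overline{\mathbb{A}}_r$, exactly as is tacitly used for $T_X$ in the proof of Theorem \ref{block_1}), and then take real parts to obtain
\[
Re\,\Gamma_\epsilon(\alpha\widehat{T}_X)=\begin{bmatrix}
Re\,\Gamma_\epsilon(\alpha T_1) & X\big(\Gamma_\epsilon(\alpha T_1)-\Gamma_\epsilon(\alpha T_2)\big)\slash 2\\
\big(X(\Gamma_\epsilon(\alpha T_1)-\Gamma_\epsilon(\alpha T_2))\big)^*\slash 2 & Re\,\Gamma_\epsilon(\alpha T_2)
\end{bmatrix}.
\]
Next I would note that $\sigma(\widehat{T}_X)\subseteq\sigma(T_1)\cup\sigma(T_2)\subseteq\overline{\mathbb{A}}_r$ — the same block-upper-triangular spectral inclusion (via \cite{Hong}) already invoked in the proof of Theorem \ref{block_1} — so the spectral half of Agler's criterion (Theorem \ref{Agler_pencil}) is automatic, and that $Re\,\Gamma_\epsilon(\alpha T_1)\ge 0$ and $Re\,\Gamma_\epsilon(\alpha T_2)\ge 0$ because $T_1,T_2$ are $\mathbb{A}_r$-contractions. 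By Theorem \ref{Agler_pencil}, $\widehat{T}_X$ is an $\mathbb{A}_r$-contraction iff the displayed block is positive for all $\epsilon\in(0,1)$, $\alpha\in\mathbb{T}$; and Theorem \ref{Ando}, applied with $P=Re\,\Gamma_\epsilon(\alpha T_1)$, $Q=Re\,\Gamma_\epsilon(\alpha T_2)$ and $R=X(\Gamma_\epsilon(\alpha T_1)-\Gamma_\epsilon(\alpha T_2))\slash 2$, converts that positivity into the existence of a contraction $K(\epsilon;\alpha)$ with
\[
\frac{X\big(\Gamma_\epsilon(\alpha T_1)-\Gamma_\epsilon(\alpha T_2)\big)}{2}=Re\,\Gamma_\epsilon(\alpha T_1)^{1\slash 2}\,K(\epsilon;\alpha)\,Re\,\Gamma_\epsilon(\alpha T_2)^{1\slash 2},
\]
which is precisely $(2)$.

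For $(2)\Leftrightarrow(3)$ I would reuse the Halmos unitary dilation word for word: given the contraction $K(\epsilon;\alpha)$, set
\[
U(\epsilon;\alpha)=\begin{bmatrix}
K(\epsilon;\alpha) & (I-K(\epsilon;\alpha)K(\epsilon;\alpha)^*)^{1\slash 2}\\
(I-K(\epsilon;\alpha)^*K(\epsilon;\alpha))^{1\slash 2} & -K(\epsilon;\alpha)^*
\end{bmatrix},
\]
which is unitary, and check directly that $\begin{bmatrix} Re\,\Gamma_\epsilon(\alpha T_1)^{1\slash 2} & 0\end{bmatrix}U(\epsilon;\alpha)\begin{bmatrix} Re\,\Gamma_\epsilon(\alpha T_2)^{1\slash 2}\\ 0\end{bmatrix}=Re\,\Gamma_\epsilon(\alpha T_1)^{1\slash 2}K(\epsilon;\alpha)Re\,\Gamma_\epsilon(\alpha T_2)^{1\slash 2}$; conversely, since the $(1,1)$-corner of any unitary is a contraction, a unitary $U(\epsilon;\alpha)$ as in $(3)$ yields the contraction $K(\epsilon;\alpha)=U(\epsilon;\alpha)_{11}$, recovering $(2)$.

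I do not expect a genuine obstacle: the whole argument is a transcription of the proof of Theorem \ref{block_1}. The only points that ask for a little care are (i) justifying that the identity of Lemma \ref{action2}, stated for rational functions, persists for the transcendental symbol $\Gamma_\epsilon$ — and that $\Gamma_\epsilon(\alpha\widehat{T}_X)$ is even defined, which follows from the spectral inclusion above; and (ii) the fact that Theorem \ref{Ando} is now used in its genuinely two-operator form, with the distinct diagonal entries $Re\,\Gamma_\epsilon(\alpha T_1)$ and $Re\,\Gamma_\epsilon(\alpha T_2)$, whereas in Theorem \ref{block_1} these coincided; this is harmless, since Theorem \ref{Ando} is stated for two arbitrary positive operators $P,Q$.
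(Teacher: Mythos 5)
Your proposal is correct and follows essentially the same route as the paper: Lemma \ref{action2} applied to $\Gamma_\epsilon$, the spectral inclusion via \cite{Hong}, Agler's criterion (Theorem \ref{Agler_pencil}), Theorem \ref{Ando} with $P=Re\,\Gamma_\epsilon(\alpha T_1)$, $Q=Re\,\Gamma_\epsilon(\alpha T_2)$, and the Halmos unitary for $(2)\Leftrightarrow(3)$. The only difference is that you explicitly flag the extension of Lemma \ref{action2} from rational functions to the transcendental symbol $\Gamma_\epsilon$ by approximation, a point the paper passes over tacitly; this is a point in your favour, not a divergence.
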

	\begin{proof}
$(1) \iff (2)$. By Lemma \ref{Agler_pencil} , $\widehat{T}_X$ is an $\mathbb{A}_r$-contraction if and only if  $\sigma(\widehat{T}_X) \subseteq \overline{\mathbb{A}}_r$ and
		\begin{equation}\label{eq:9.6.1}
			\begin{split}
 Re \Gamma_\epsilon(\alpha \widehat{T}_X)= \begin{bmatrix}
					Re \Gamma_\epsilon(\alpha T_1) & X\bigg(\Gamma_\epsilon(\alpha T_1)-\Gamma_\epsilon(\alpha T_2)\bigg)\slash 2\\
					X^*\bigg(\Gamma_\epsilon(\alpha T_1)^*-\Gamma_\epsilon(\alpha T_2)^*\bigg)\slash 2 & Re \Gamma_\epsilon(\alpha T_2)\\
				\end{bmatrix}\geq 0 
			\end{split}
		\end{equation}
		for every $\epsilon \in (0,1)$ and $\alpha \in \mathbb{T}$. Since $\widehat{T}_X=\begin{bmatrix}
			T_1 & X(T_1-T_2) \\
			0 & T_2
		\end{bmatrix}$ on $\mathcal{H}\oplus \mathcal{H}$ and $T_1, T_2$ are $\mathbb{A}_r$-contractions on $\mathcal{H}$, Lemma 1 in \cite{Hong} implies that $\sigma(T_X) \subseteq \sigma(T_1) \cup \sigma(T_2) \subseteq \overline{\mathbb{A}}_r$. By Theorem \ref{Agler_pencil}, we have that 
		\[
		Re \Gamma_\epsilon(\alpha T_1) \geq 0 \quad \mbox{and} \quad Re \Gamma_\epsilon(\alpha T_2) \geq 0 \quad \mbox{for every} \quad \epsilon \in (0,1) , \alpha \in \mathbb{T}.
		\]
		Now, it follows from Theorem \ref{Ando} that (\ref{eq:9.6.1}) holds if and only if there exists a contraction $K(\epsilon;\alpha)$ on $\mathcal{H}$ such that 
		\[
			\frac{1}{2}X\bigg(\Gamma_\epsilon(\alpha T_1)-\Gamma_\epsilon(\alpha T_2)\bigg)=Re \Gamma_\epsilon(\alpha T_1)^{1\slash 2} K(\epsilon; \alpha)\  Re \Gamma_\epsilon(\alpha T_2)^{1\slash 2}
		\]
		for every $\epsilon \in (0,1)$ and $\alpha \in\mathbb{T}$. 
		
	\vspace{0.2cm}
		
	\noindent $(2) \iff (3)$. For every $\epsilon$ in $(0,1)$ and $\alpha \in \mathbb{T}$, the operator $U(\epsilon;\alpha)$ given by
	\[
	U(\epsilon;\alpha)=\begin{bmatrix}
		K(\epsilon;\alpha) & (I-K(\epsilon;\alpha)K(\epsilon;\alpha)^*)^{1\slash 2}\\
		(I-K(\epsilon;\alpha)^*K(\epsilon;\alpha))^{1\slash 2} & -K(\epsilon;\alpha)^*
	\end{bmatrix}
	\]
	defines a unitary operator on $\mathcal{H} \oplus \mathcal{H}$. Again arguing as in the $(2) \Leftrightarrow (3)$ part of the proof of Theorem \ref{block_1}, we have that $	U(\epsilon;\alpha)$ is a unitary for every $\epsilon$ in $(0,1)$ and $\alpha \in \mathbb{T}$. A routine calculation yields
		\[
		\begin{bmatrix}
			Re \Gamma_\epsilon(\alpha T_1)^{1\slash 2} & 0\\
		\end{bmatrix}
		U(\epsilon; \alpha) \begin{bmatrix}
			Re \Gamma_\epsilon(\alpha T_2)^{1\slash 2}\\
			0\\
		\end{bmatrix}=Re \Gamma_\epsilon(\alpha T_1)^{1\slash 2} K(\epsilon; \alpha)\  Re \Gamma_\epsilon(\alpha T_2)^{1\slash 2}
		\]
		which shows the equivalence of $(2)$ and $(3)$. The proof is now complete.
	\end{proof}
	
	\vspace{0.3cm}

\end{document}